\newtheorem{lemma}{Lemma}[subsection]
\newtheorem{theorem}[lemma]{Theorem}
\newtheorem{proposition}[lemma]{Proposition}
\newtheorem{corollary}[lemma]{Corollary}
\newtheorem{conjecture}[lemma]{Conjecture}
\newtheorem{problem}[lemma]{Problem}
\theoremstyle{definition}
\newtheorem{remark}[lemma]{Remark}
\newcommand{\Z}{\mathbb{Z}}			
\newcommand{\R}{\mathbb{R}}			
\newcommand{\into}{\hookrightarrow}
\title{Deletion and contraction in configuration spaces of graphs}
\author{Sanjana Agarwal, Maya Banks, Nir Gadish$^1$, Dane Miyata}
\thanks{$^1$ N.G. is supported by NSF Grant No. DMS-1902762}
\keywords{graph configuration space, edge deletion and contraction}
\subjclass[2010]{Primary
55R80; 
Secondary
05C10, 
20F36
}
\begin{document}

\maketitle

\begin{abstract}
    The aim of this article is to provide space level maps between configuration spaces of graphs that are predicted by algebraic manipulations of cellular chains. More explicitly, we consider edge contraction and half-edge deletion, and identify the homotopy cofibers in terms of configuration spaces of simpler graphs. The construction's main benefit lies in making the operations functorial - in particular, graph minors give rise to compatible maps at the level of fundamental groups as well as generalized (co)homology theories.
    
    As applications we provide a long exact sequence for half-edge deletion in any generalized cohomology theory, compatible with cohomology operations such as the Steenrod and Adams operations, allowing for inductive calculations in this general context. We also show that the generalized homology of unordered configuration spaces is finitely generated as a representation of the opposite graph minor category. 
\end{abstract}

\section{Introduction}

For every graph $\Gamma$ (a finite 1-dimensional $CW$ complex) denote the configuration space of $n$ distinct points on $\Gamma$ by
\[
Conf_n(\Gamma) := \{ (x_1,\ldots,x_n)\in \Gamma^n \mid \forall i<j \; x_i\neq x_j \}.
\]
The symmetric group $S_n$ acts on this space by permuting the labels, and the quotient by this action is the unordered configuration space $UConf_n(\Gamma)$.

This paper provides space level maps between such configuration spaces corresponding to deletion and contraction of edges in the graph. Such constructions have been predicted by algebraic manipulations of cellular chains and applied in finite generation proofs. The immediate implication of our constructions is that various deletion and contraction operations on graphs induce well-defined maps on the fundamental groups of the configuration spaces as well as on any generalized (co)homology theory with its cohomology operations such as Steenrod and Adams operations.

Specifically, we discuss the following two constructions:
\begin{itemize}
    \item It has been observed (\cite[Lemma C.7]{an2017subdivisional}) that edge contraction on the graph induces well defined maps on a chain model of the graph's configuration spaces. This chain contraction was implicitly known to lift to a unique homotopy class of space level maps, as explained in \cite[Remark 1.10]{miyata2020categorical}. We construct a (zigzag of) space level maps exhibiting this homotopy class.
    
    In particular, this allows us to describe the homotopy cofiber
    of the contraction map as a certain configuration space with constraints.
    
    \item We identify the homotopy cofiber of the inclusion of a subgraph with some half-edges deleted with (suspensions of) the configuration space of a simpler graph.
    This gives an inductive tool for computing topological invariants via the long exact sequence of a pair, with all three terms being configuration spaces of graphs.
\end{itemize}

\begin{remark}[Functoriality]
All of our constructions will be obviously functorial in the data of a graph $\Gamma$ along with some additional input such as a choice of vertex, a set of half-edges, or a topological sub-tree. We will not belabor the point of this functoriality throughout the constructions.
\end{remark}

Let us discuss a number of applications of our constructions.

\subsection{Application 1: LES in K-theory and generalized cohomology}
Using the characterization of homotopy cofiber for half-edge deletion in \S\ref{sec:deletion} one gets the following inductive machinery for computing generalized homology and cohomology of configuration spaces. This sequence is already new for ordinary homology when deleting only one half-edge.
\begin{theorem}\label{thm:LES}
Let $\Gamma$ be a graph and fix a vertex $v\in V(\Gamma)$ along with a set of half edges $H=\{h_1,\ldots, h_r\}$ incident on $v$. For every generalized cohomology theory $E$ one has a long exact sequence, compatible with cohomology operations and natural with respect to graph embeddings and automorphisms
\begin{eqnarray*}
\ldots \to E^i(UConf_n(\Gamma \setminus \cup H)) \overset{\oplus\left(  add_v^*-add_h^* \right)}{\longrightarrow} E^{i}(UConf_{n-1}(\Gamma \setminus \{v\}))^{\oplus |H|} \overset{d}{\longrightarrow} \qquad\qquad \\
\longrightarrow E^{i+1}(UConf_n(\Gamma))      \overset{\iota_H^*}{\longrightarrow} E^{i+1}(UConf_n(\Gamma \setminus \cup H)) \to \ldots
\end{eqnarray*}

This LES is a reflection of the Puppe cofiber sequence of the deletion given in \S\ref{sec:applications}. In particular a similar LES exists for $E$-homology, as well as the $S_n$-equivariant versions for ordered configuration space.
\end{theorem}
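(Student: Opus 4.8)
The plan is to reduce the statement to the geometric input established in \S\ref{sec:deletion} and then feed it through the Eilenberg--Steenrod axioms; the only genuinely topological content is the identification of the homotopy cofiber, and everything else is formal. First I would recall from \S\ref{sec:deletion} the description of the homotopy cofiber of the deletion inclusion $\iota_H \colon UConf_n(\Gamma \setminus \cup H) \into UConf_n(\Gamma)$. Up to homotopy this cofiber is a wedge, indexed by the half-edges $h \in H$, of suspensions of a based version of the configuration space of the simpler graph $\Gamma \setminus \{v\}$; concretely,
\[
C(\iota_H) \;\simeq\; \bigvee_{h \in H} \Sigma\bigl(UConf_{n-1}(\Gamma \setminus \{v\})_+\bigr).
\]
The only property of this equivalence that the formal part of the argument requires is that it induces, via the suspension isomorphism and additivity over the wedge, an isomorphism $\tilde E^{i+1}(C(\iota_H)) \cong E^i(UConf_{n-1}(\Gamma \setminus \{v\}))^{\oplus |H|}$.

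Next I would form the Puppe cofiber sequence of $\iota_H$,
\[
UConf_n(\Gamma \setminus \cup H) \xrightarrow{\ \iota_H\ } UConf_n(\Gamma) \xrightarrow{\ g\ } C(\iota_H) \xrightarrow{\ \partial\ } \Sigma\bigl(UConf_n(\Gamma \setminus \cup H)_+\bigr) \to \cdots,
\]
and apply $E$. Since a generalized cohomology theory is exact on cofiber sequences and satisfies the suspension axiom, this produces a long exact sequence in which the cofiber term is rewritten by the isomorphism above. Matching the three consecutive groups of the Puppe sequence against the three spaces in the statement then yields exactly the displayed sequence, with the map $E^{i+1}(UConf_n(\Gamma)) \to E^{i+1}(UConf_n(\Gamma \setminus \cup H))$ appearing tautologically as $\iota_H^*$.

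The step that carries the real weight is identifying the connecting homomorphism $\partial^*$, which after the suspension isomorphism is a map $E^i(UConf_n(\Gamma \setminus \cup H)) \to E^i(UConf_{n-1}(\Gamma \setminus \{v\}))^{\oplus |H|}$, with $\bigoplus_{h \in H}(add_v^* - add_h^*)$. To do this I would trace the collapse map $\partial \colon C(\iota_H) \to \Sigma(UConf_n(\Gamma \setminus \cup H)_+)$ through the wedge equivalence, reducing the problem to the attaching maps of the cells of $C(\iota_H)$. On the $h$-th summand the suspension coordinate is glued into $UConf_n(\Gamma \setminus \cup H)$ along an arc interpolating between the configuration with the extra point placed at the vertex $v$ and the configuration with that point pushed out along the detached half-edge $h$; these two endpoints are precisely the stabilization maps $add_v$ and $add_h$. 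Consequently $\partial$ desuspends to the difference $add_v - add_h$ on each wedge summand, and applying $E$ gives the asserted map $\bigoplus_{h}(add_v^* - add_h^*)$. This geometric identification of $\partial$ is the main obstacle, as it is the one place where the actual shape of the cofiber, rather than a formal property of $E$, is used.

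Once $\partial^*$ is pinned down, the remaining assertions are immediate. Compatibility with cohomology operations holds because the entire sequence is the $E$-cohomology of a single fixed diagram of spaces and maps, on which every stable operation (Steenrod, Adams, etc.) acts naturally; naturality with respect to graph embeddings and automorphisms follows from the functoriality of the cofiber construction emphasized in the introductory Remark, since each arrow of the Puppe sequence is built functorially from the pair $(\Gamma, H)$. Finally, the $E$-homology version follows by the dual argument using the homology axioms in place of the cohomology axioms, and the $S_n$-equivariant statement for ordered configuration spaces follows by performing the same cofiber identification $S_n$-equivariantly before applying the theory.
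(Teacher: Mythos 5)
Your proposal is correct and follows essentially the same route as the paper: the paper likewise forms the Puppe cofiber sequence of $\iota_H$, uses the cofiber identification and the boundary-map computation from \S\ref{sec:deletion} (where the $\tilde\Sigma add_v - \tilde\Sigma add_h$ description is established as a separate proposition via the ``three thirds'' homotopy), and then applies $E$ to obtain the long exact sequence. The only difference is presentational: the paper treats the identification of the connecting map as a prior geometric result to be cited, whereas you re-sketch its derivation inside the proof.
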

\begin{remark}
The above LES in the special case in which $E$ is ordinary homology and with $H$ the set of \emph{all} half edges incident on $v$ has been a central tool in the work of An--Drummond-Cole--Knudsen \cite{an2020edge}, where it was discovered and used as a chain-level algebraic manipulation.

One of the advantages of our version above is that one can elect to remove one half-edge at a time, thereby always considering a single configuration space in every term.
\end{remark}

Another interesting special case is the LES in K-theory, respecting Adams operations, with all three terms being configuration spaces of graphs -- a result that could be of interest in quantum physics in light of \cite{macikazek2019non}.

\subsection{Application 2: contraction maps on graph braid groups}
As mentioned in \cite[Remark 1.10]{miyata2020categorical}, the edge contraction maps between configuration spaces are associated with well-defined homomorphisms between the respective fundamental groups -- the so called \emph{graph braid groups}. Our explicit space level construction of these maps provides a way to study the induced maps on $\pi_1$ directly.

\begin{problem}
Describe the edge contraction homomorphism between graph braid groups, e.g. with respect to the Farley-Sabalka presentation of tree braid groups \cite{farley2005discrete}.
\end{problem}

For our next couple of items we observe that the space level lift of edge contraction turns the assignments
\[
\Gamma\; \longmapsto \; UConf_n(\Gamma) \quad \text{ and } \quad Conf_n(\Gamma)
\]
into functors from the Miyata-Ramos-Proudfoot opposite graph-minor category \cite{miyata2020categorical} to the homotopy category of spaces.
In particular, the application of any homotopy invariant functor, such as $\pi_1$ and generalized homology theories gives representations of this category.

We consider fundamental groups first. The action of graph minors on $\Gamma\mapsto \pi_1(UConf_n(\Gamma))$ allows us to make the following conjecture:
Let $\gamma_i \pi(-)$ by the terms in the lower central series of $\pi_1(UConf_n(-))$, considered as representations of the opposite graph-minor category.
\begin{conjecture}[\textbf{Finite generation of LCS quotients}]
Every successive quotient of the LCS, $\gamma_i \pi(-)/\gamma_{i+1} \pi(-)$, forms a finitely generated representation of the opposite graph-minor category.
\end{conjecture}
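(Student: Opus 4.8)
The plan is to reduce the conjecture to a single finiteness property of the monoidal structure on the opposite graph-minor category $\mathcal{G}^{\mathrm{op}}$, using the finite generation of first homology that is already available. First I would exploit the naturality of the associated graded Lie ring. For any group $G$, the Lie ring $\mathrm{gr}_\bullet(G)=\bigoplus_i \gamma_i G/\gamma_{i+1}G$ is generated by its degree-one piece $\mathrm{gr}_1(G)=G^{ab}=H_1(G;\Z)$, and this is natural in $G$. Applying it to $G=\pi(-)=\pi_1(UConf_n(-))$ therefore yields, for each $i$, a surjection of representations of $\mathcal{G}^{\mathrm{op}}$
\[
\mathrm{FreeLie}_i\big(H_1(UConf_n(-);\Z)\big)\twoheadrightarrow \gamma_i\pi(-)/\gamma_{i+1}\pi(-),
\]
where $\mathrm{FreeLie}_i$ is the degree-$i$ part of the free Lie ring, applied pointwise. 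Since quotients of finitely generated representations are finitely generated, it suffices to prove that each $\mathrm{FreeLie}_i\big(H_1(UConf_n(-))\big)$ is finitely generated. The degree-one input $V:=H_1(UConf_n(-);\Z)=\mathrm{gr}_1(\pi(-))$ is finitely generated over $\mathcal{G}^{\mathrm{op}}$ by the $E=H\Z$ case of our finite generation theorem for the generalized homology of unordered configuration spaces.

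Next I would use that $\mathrm{FreeLie}_i(V)$ is a subobject of the pointwise tensor power $V^{\otimes i}$, being the degree-$i$ part of the Lie subalgebra of the tensor algebra generated by $V$. Granting that $\mathcal{G}^{\mathrm{op}}$ is Noetherian -- so that subrepresentations of finitely generated representations stay finitely generated, which should follow from the well-quasi-ordering of graphs under minors together with local finiteness, along the lines of the Sam--Snowden and Gan--Li criteria -- the whole conjecture reduces to the single claim that the pointwise tensor powers $V^{\otimes i}$ of a finitely generated representation of $\mathcal{G}^{\mathrm{op}}$ are again finitely generated.

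The hard part is exactly this tensor-power statement, and I expect it to be the main obstacle: for functor categories the pointwise tensor product of two finitely generated objects need not be finitely generated without special features of the monoidal structure. For $\mathrm{FI}$-modules the analogous closure property is a theorem, proved by comparing the pointwise product with the disjoint-union convolution product and bounding the degrees of generators. I would attempt the same strategy here, using the disjoint union of graphs to build a convolution product on $\mathcal{G}^{\mathrm{op}}$-representations, identifying the pointwise product as the pullback of the convolution product along the diagonal, and then bounding the generators of $V\otimes V$ in terms of those of $V$ via the Robertson--Seymour well-quasi-ordering. As a first pass I would work rationally, where the Dynkin idempotent splits $\mathrm{FreeLie}_i(V)$ off $V^{\otimes i}$ as a direct summand, isolating the tensor-power obstruction cleanly; one would then promote to the integral statement through the PBW filtration, whose associated graded replaces the universal enveloping algebra by the symmetric algebra on $\mathrm{gr}_\bullet$ and reduces integral finite generation of Lie powers to that of tensor and symmetric powers.
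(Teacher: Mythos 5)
This statement is labeled a \emph{conjecture} in the paper: there is no proof to compare against, only the remark following it, which sketches exactly the path you take (reduce to generation of the associated graded Lie ring in degree one, where $H_1(UConf_n(-))$ is already known to be finitely generated). Your reduction is correct as far as it goes, and in fact can be streamlined: since $\gamma_{i+1}G/\gamma_{i+2}G=[\mathrm{gr}_iG,\mathrm{gr}_1G]$ for any group and the bracket is bilinear, one gets a natural surjection $H_1(UConf_n(-))^{\otimes i}\twoheadrightarrow \gamma_i\pi(-)/\gamma_{i+1}\pi(-)$ directly, with no need for free Lie rings, the Dynkin idempotent, PBW, or even the Noetherianity of the category (quotients of finitely generated representations are finitely generated for free). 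The Noetherianity you ask to "grant" is in any case not a gap: it is \cite[Theorem 1.2]{miyata2020categorical}, which this paper already cites.

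The genuine gap is the one you yourself flag: the claim that the pointwise tensor square of a finitely generated representation of the opposite graph-minor category is again finitely generated. Nothing in your proposal, and nothing cited in this paper or in \cite{miyata2020categorical}, establishes this, and the FI-module analogy does not transfer in the naive way. The FI proof works because two injections $T\hookrightarrow S$ and $U\hookrightarrow S$ both factor through the single subset $f(T)\cup g(U)$, whose size is bounded by $|T|+|U|$. In the graph-minor category a morphism into $\Gamma$ is specified by a pair $(D,C)$ of edge sets to delete and contract; given two such pairs $(D_1,C_1)$ and $(D_2,C_2)$, the natural common refinement $(D_1\cap D_2,\,C_1\cap C_2)$ fails whenever an edge lies in $D_1\cap C_2$ (deleted for one morphism, contracted for the other): such an edge must be kept in the refinement, and the number of these conflicts is not bounded by the complexities of the two minors, so the refinement's complexity is not controlled. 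Until you either prove tensor-power closure for this category (presumably requiring a genuinely new structural input, perhaps via the convolution product you mention) or find a route to the LCS quotients that avoids tensor powers, the argument reduces the conjecture to another open statement rather than proving it.
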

\begin{remark}
The above conjecture is known to hold in the case $i=1$, equivalently for $H_1(UConf_n(-))$ (see \cite{miyata2020categorical}). Then a possible path to proving the conjecture would be to show that the Lie ring $\oplus_{i=1}^{\infty} \gamma_i \pi/\gamma_{i+1} \pi$ is generated by its degree $1$ elements.
\end{remark}

\subsection{Application 3: finite generation for generalized homology theories}
As above, consider $UConf_n(-)$ as a functor from the Miyata-Ramos-Proudfoot opposite graph-minor category \cite{miyata2020categorical} to the homotopy category of spaces. Then the application of a generalized homology theory gives a linear representation of this category. For these representations we prove,
\begin{theorem} \label{thm:fin-gen-homology}
Let $E$ be any connective multiplicative generalized homology theory such that its coefficient ring $E_*$ is Noetherian. Then for every $i\in \Z$, the functor
\[
\Gamma \mapsto E_i(UConf_n(\Gamma))
\]
is a finitely generated representation of the opposite graph-minor category.

Explicitly, this implies that for every fixed $n$ and $i$ there exist finitely many graphs $\Gamma_1,\ldots,\Gamma_r$ and $E$-homology classes $\alpha_j\in E_i(UConf_n(\Gamma_j))$ whose images under deletion and contraction of graphs span $E_i(UConf_n(\Gamma))$ for every graph $\Gamma$. 
\end{theorem}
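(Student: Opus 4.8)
The plan is to prove finite generation by Noetherian induction on graphs, leveraging the long exact sequence of Theorem~\ref{thm:LES} together with standard finiteness properties of Noetherian module categories. The key structural input is that the opposite graph-minor category admits a notion of ``complexity'' (for instance, the number of edges, or the first Betti number plus edge count) that is strictly decreased by both deletion and contraction. I would begin by recalling that a representation of the opposite graph-minor category valued in $E_*$-modules is finitely generated if and only if there are finitely many generating classes on finitely many graphs whose images under the category's morphisms (generated by deletions and contractions) span all values; this is the explicit reformulation stated in the theorem. The category of such representations is abelian, and because $E_*$ is Noetherian and the relevant Hom-sets and generator-sets are finite in each complexity, subrepresentations and quotients of finitely generated representations remain finitely generated --- this is the Noetherianity of the representation category that I would isolate as a lemma.

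The main engine is the half-edge deletion long exact sequence from Theorem~\ref{thm:LES}, applied in its $E$-homology form and taken to be natural in $\Gamma$, i.e.\ as an exact sequence \emph{of representations} of the opposite graph-minor category. First I would set up an induction that removes half-edges one at a time: fixing a vertex $v$ and a single half-edge $h$, the LES relates $E_i(UConf_n(\Gamma))$ to the $E$-homology of $UConf_n(\Gamma\setminus h)$ and of $UConf_{n-1}(\Gamma\setminus\{v\})$, both of which are configuration spaces of graphs that are strictly simpler (fewer half-edges, or fewer vertices and points). By the inductive hypothesis these neighboring terms are finitely generated representations. Exactness then squeezes $E_i(UConf_n(\Gamma))$ between a quotient of one finitely generated term and a subrepresentation of another; invoking the Noetherian closure properties above yields that the middle term is finitely generated. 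I would need to verify the base cases --- graphs with no edges, or small $n$ --- where the configuration spaces are contractible or discrete and finite generation is immediate.

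The delicate point that deserves care is organizing the induction so that the connecting maps and the maps $add_v^*-add_h^*$ and $\iota_H^*$ in the LES are genuinely morphisms in the representation category, i.e.\ that the long exact sequence is natural with respect to all graph minors simultaneously, not merely with respect to the single deletion being performed. Theorem~\ref{thm:LES} asserts naturality with respect to graph embeddings and automorphisms, so the substantive verification is that these connecting homomorphisms commute with the other generating morphisms (contractions and unrelated deletions) of the opposite graph-minor category. This compatibility is exactly what the functoriality of the space-level constructions (the Puppe cofiber sequence of \S\ref{sec:applications}) is designed to provide, so I would derive it from the functoriality of the cofiber sequence rather than re-proving it by hand.

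The step I expect to be the main obstacle is ensuring that the induction terminates and that the two ``simplification'' directions --- deleting a half-edge and deleting the vertex while dropping a point --- can be combined into a single well-founded complexity measure on objects of the category. One must choose an ordering, such as the lexicographic order on the pair (number of points $n$, number of half-edges), that strictly decreases along \emph{every} morphism appearing in the LES, so that the inductive hypothesis legitimately applies to all neighboring terms. A secondary subtlety is the connectivity hypothesis: I would use that $E$ is connective and $E_*$ Noetherian precisely to guarantee that $E_i$ of a configuration space is a finitely generated $E_*$-module in each fixed complexity, which seeds the generation count and prevents the number of generators from blowing up across the induction.
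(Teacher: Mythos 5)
Your proposal takes a genuinely different route from the paper, and unfortunately it has a gap that I do not see how to close. The paper does not induct on graphs at all: it runs the Atiyah--Hirzebruch spectral sequence with $E^1$-page $C_p(UConf_n(-))\otimes E_q$, replaces singular chains by the reduced \'{S}wi\k{a}tkowski complex $\widetilde{S}_{*,n}(-)$ (quasi-isomorphic by \cite{an2017subdivisional}, and compatible with the minor-category action by Corollary \ref{cor:contraction}), invokes the already-known fact \cite[Proof of 1.15]{miyata2020categorical} that $\widetilde{S}_{i,n}(-)$ is a finitely generated representation, and then uses Noetherianity of the representation category \cite[Theorem 1.2]{miyata2020categorical} to pass finite generation through subquotients to the $E^\infty$-page and hence to $E_i(UConf_n(-))$. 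Connectivity of $E$ enters only to make the filtration finite. The crucial feature of this argument is that finite generation of the \emph{functor} is inherited wholesale from a chain-level representation already known to be finitely generated; no graph-by-graph analysis is needed.

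Your induction, by contrast, proves the wrong kind of statement. Finite generation of a representation of the opposite graph-minor category means one finite set of classes, supported on finitely many graphs, whose images under minor morphisms span the value at \emph{every} graph. A well-founded induction on a complexity measure of $\Gamma$ can at best show that each individual $E_i(UConf_n(\Gamma))$ is a finitely generated $E_*$-module; since there are infinitely many graphs, the generators you accumulate along the induction need not stabilize to a finite set, and nothing in the LES gives you the uniform degree bound that would let Noetherianity of the category take over. A second, independent problem is that the long exact sequence of Theorem \ref{thm:LES} is not an exact sequence of representations of the opposite graph-minor category: its outer terms $E_i(UConf_{n-1}(\Gamma\setminus\{v\}))$ and $E_i(UConf_n(\Gamma\setminus\cup H))$ depend on a choice of vertex and half-edges in each graph, which is not functorial data for minor morphisms (a contraction elsewhere in $\Gamma$ need not preserve your chosen $h$, and the theorem only asserts naturality for embeddings and automorphisms). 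So the step ``exactness squeezes the middle term between finitely generated representations'' is not available as stated. If you want to avoid the spectral sequence, you would still need some substitute for the finitely generated chain-level model --- which is exactly the ingredient the \'{S}wi\k{a}tkowski complex provides.
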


\begin{remark}[\textbf{Generalized cohomology theories}]
The theory of graph-anyons in \cite{macikazek2019non} expresses the interest of quantum physicists in vector bundles over $UConf_n(G)$, and thus in the $K$-group $K^0(UConf_n(G))$. Following this, we ask whether the above theorem can be extended in some way to multiplicative \emph{cohomology} theories with Noetherian coefficient ring $E^*$. That is, whether the functors 
\[
\Gamma \mapsto E^i(UConf_n(\Gamma))
\]
are in some sense finitely generated representations of the graph-minor category.

An approach to finite generation of cohomology is to consider its linear duals. Explicitly, one could try and apply the Noetherian property of the opposite graph-minor category to prove finite generation for the functors $\operatorname{Hom}_{E_*}(E^i(UConf_n(-)), E_0)$ or related constructions. This idea appears e.g. in \cite{kupers2018representation}, where Kupers and Miller prove that duals of homotopy groups of configuration spaces are finitely generated FI-modules. Thus we propose,

\begin{conjecture}
For every fixed $n$, the dual of the Grothendieck group of vector bundles over configuration spaces of graphs
\[
\Gamma \mapsto \operatorname{Hom}_{\mathbb{Z}}(K^0(UConf_n(\Gamma)),\mathbb{Z})
\]
is generated by finitely many functions on vector bundles under deletion and contraction.
\end{conjecture}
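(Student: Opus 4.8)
The plan is to reduce the conjecture to Theorem~\ref{thm:fin-gen-homology} by dualising and by trading the non-connective theory $KU$ for the connective theory $ku$. Write $\mathcal{G}^{op}$ for the opposite graph-minor category. Since $\Gamma\mapsto UConf_n(\Gamma)$ is a functor $\mathcal{G}^{op}\to \mathrm{hTop}$, the contravariant functor $K^0=KU^0$ turns it into a representation of the graph-minor category, and a further application of the contravariant $\operatorname{Hom}_{\Z}(-,\Z)$ returns a representation of $\mathcal{G}^{op}$. Thus $\Gamma\mapsto \operatorname{Hom}_{\Z}(K^0(UConf_n(\Gamma)),\Z)$ is genuinely a $\mathcal{G}^{op}$-representation, and ``generated by finitely many functions under deletion and contraction'' is exactly the assertion that it is finitely generated as such. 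Because $\mathcal{G}^{op}$ is Noetherian (the input to Theorem~\ref{thm:fin-gen-homology}), it suffices to realise this functor as a quotient, or a subrepresentation, of a representation already known to be finitely generated.

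First I would identify the dual integrally. Each $UConf_n(\Gamma)$ has the homotopy type of a finite $CW$ complex (Abrams' discretised configuration space), so $KU^0$ and $KU_*$ of it are finitely generated abelian groups, and the universal coefficient theorem for complex $K$-theory supplies a natural short exact sequence
\[
0\to \operatorname{Ext}^1_{\Z}(KU_{-1}(X),\Z)\to KU^0(X)\to \operatorname{Hom}_{\Z}(KU_0(X),\Z)\to 0,
\]
valid because graded $KU_*=\Z[\beta^{\pm1}]$-modules have global dimension one. The left-hand term is finite, so applying $\operatorname{Hom}_{\Z}(-,\Z)$ annihilates it and yields a natural isomorphism $\operatorname{Hom}_{\Z}(KU^0(X),\Z)\cong \operatorname{Hom}_{\Z}(\operatorname{Hom}_{\Z}(KU_0(X),\Z),\Z)$, i.e. the double dual $KU_0(X)/\mathrm{torsion}$. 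Naturality in $X$ makes this an isomorphism of $\mathcal{G}^{op}$-representations, so the conjecture becomes the statement that $\Gamma\mapsto KU_0(UConf_n(\Gamma))/\mathrm{torsion}$ is finitely generated.

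The remaining issue is that $KU$ is not connective, so Theorem~\ref{thm:fin-gen-homology} does not apply to it directly; the key is a uniform dimension bound. In Abrams' discrete model a $k$-cell records $k$ of the $n$ particles sitting on pairwise-disjoint edges, whence $\dim UConf_n(\Gamma)\le n$ for every graph $\Gamma$. Comparing the Atiyah--Hirzebruch spectral sequences of $ku$ and $KU$, the map $ku_i(X)\to KU_i(X)$ is an isomorphism once $i\ge \dim X$; with $m_0=\lceil n/2\rceil$ fixed and $2m_0\ge n$, Bott periodicity then gives natural isomorphisms of $\mathcal{G}^{op}$-representations
\[
KU_0(UConf_n(-))\;\cong\;KU_{2m_0}(UConf_n(-))\;\cong\;ku_{2m_0}(UConf_n(-)).
\]
Now $ku$ is connective, multiplicative, and has Noetherian coefficient ring $ku_*=\Z[\beta]$, so Theorem~\ref{thm:fin-gen-homology} applies and $ku_{2m_0}(UConf_n(-))$ is a finitely generated $\mathcal{G}^{op}$-representation. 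Its quotient by the (functorial) torsion subfunctor is again finitely generated, completing the argument.

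The main obstacle is precisely this passage from the periodic theory to the connective one: everything depends on being able to choose a single homological degree $2m_0$ that computes $KU_0$ for all graphs at once, which is possible only because of the uniform bound $\dim UConf_n(\Gamma)\le n$ for fixed $n$. The secondary technical point to pin down is the naturality (in $X$, hence in $\Gamma$) of the $K$-theory universal coefficient sequence; should its integral form prove delicate, one can instead use the natural index pairing $KU_0(X)\otimes KU^0(X)\to \Z$, which induces a natural surjection $KU_0(X)\to \operatorname{Hom}_{\Z}(KU^0(X),\Z)$ with finite kernel and thereby exhibits the dual directly as a quotient of $ku_{2m_0}(UConf_n(-))$. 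Finally, one should record that finite generation as a $\mathcal{G}^{op}$-representation is exactly the asserted spanning of all values by the deletion- and contraction-images of finitely many \emph{functions on vector bundles}, which is the conjecture's phrasing.
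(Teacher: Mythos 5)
The statement you were asked to prove is not proved in the paper at all: it is posed as an open conjecture, and the paper's only gesture toward it is the remark suggesting one dualize and exploit Noetherianity of the opposite graph-minor category (in the spirit of Kupers--Miller). So there is no proof of the paper's to compare against; your proposal has to stand on its own, and as far as I can check, it does. Your reduction --- identify $\operatorname{Hom}_{\Z}(K^0(X),\Z)$ with $KU_0(X)/\mathrm{torsion}$ via the Anderson--Yosimura universal coefficient sequence, use the uniform bound $\dim UConf_n(\Gamma)\le n$ from Abrams' cube-complex model to get natural isomorphisms $KU_0 \cong KU_{2m_0} \cong ku_{2m_0}$ with $2m_0\ge n$, and then apply Theorem~\ref{thm:fin-gen-homology} to $E=ku$ (connective, multiplicative, $ku_*=\Z[\beta]$ Noetherian) --- is exactly the kind of argument the paper's remark hopes for, and the uniform dimension bound is the genuinely new observation that makes the connective-cover trick work for all graphs simultaneously.

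Three points should be nailed down to make this airtight. First, the conjecture's $K^0$ is the Grothendieck group of vector bundles on the non-compact space $UConf_n(\Gamma)$; since that space is metrizable (hence paracompact) and homotopy equivalent to a finite cube complex of dimension at most $n$, the identification $Vect_k(-)\cong[-,BU(k)]$ equates it with representable $KU^0$ and supplies the finite generation of $KU_*$ needed for your UCT step --- say this explicitly. Second, in the AHSS comparison for $ku\to KU$ you should record why no differential from the negative rows of the $KU$-page can interfere in total degree $2m_0$: any such differential would originate in a column $p>\dim X$, where $H_p$ vanishes, so the comparison map is indeed an isomorphism in degrees $\ge\dim X$. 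Third, the whole argument acts through the homotopy-category action of the opposite graph-minor category constructed in Corollary~\ref{cor:contraction}; this is where the paper's space-level lift is genuinely indispensable, since a purely chain-level contraction would act on ordinary homology but not on $K$-theory, and you should note that the UCT sequence and Bott periodicity are natural transformations on the homotopy category, hence compatible with that action. One streamlining: your fallback pairing-induced natural surjection $KU_0(X)\twoheadrightarrow\operatorname{Hom}_{\Z}(KU^0(X),\Z)$ already exhibits the dual as a quotient functor, and quotients of finitely generated representations are finitely generated with no appeal to Noetherianity, so the torsion-subfunctor discussion can be dropped entirely. Modulo these routine verifications, your argument upgrades the conjecture to a corollary of Theorem~\ref{thm:fin-gen-homology}, which is more than the paper itself does.
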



\end{remark}

\subsection{Acknowledgements}
We are deeply grateful to AIM for facilitating the workshop on Configuration Spaces of Graphs, Feb 2020, at which this project emerged. We also thank John Wiltshire-Gordon for suggesting this problem, and Safia Chettih, John Wiltshire-Gordon, and Ben Knudsen for helping us develop the ideas presented here. Special thanks to Gabriel Drummond-Cole for being a key part of this project throughout the workshop and for providing various important suggestions and advice.

\section{Key Lemma}
In this section we present and prove a lemma that will be central to the geometric constructions of this paper. Let $U\subseteq \Gamma$ be an open set and denote
$Conf_{n,k}(\Gamma,U)$
for the subspace of configurations of $n$ points in $\Gamma$ (either ordered or unordered) no more than $k$ of which lie in $U$.
\begin{lemma} \label{lem:one_point}
Fix a vertex $v\in \Gamma$ and let $U$ be the ball of radius $1/2$ around $v$. Then the inclusion
\[
Conf_{n,1}(\Gamma,U)\into Conf_n(\Gamma)
\]
is a homotopy equivalence.
\end{lemma}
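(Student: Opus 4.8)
The plan is to show that the inclusion is a homotopy equivalence by producing an explicit homotopy inverse that ``evacuates'' the ball $U$. First I would record the local picture: since $U$ is the ball of radius $1/2$ about $v$, it is an open star, homeomorphic to the cone on the set $H(v)$ of half-edges at $v$, with cone point $v$ and one leg $\ell_h\cong[0,1/2)$ (parametrized by distance to $v$) for each $h\in H(v)$. Moreover each such half-edge extends beyond the frontier radius $1/2$, so the complement $\Gamma\setminus U$ contains, along every leg, a collar of positive length into which points may be pushed; since finitely many points can always be compressed into an interval, there is no obstruction coming from a ``lack of room''.

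The core of the argument is to build a homotopy $\Phi\colon Conf_n(\Gamma)\times[0,1]\to Conf_n(\Gamma)$ with $\Phi_0=\mathrm{id}$ that moves each point monotonically \emph{away} from $v$ (so that $\rho(\Phi_t(x)_i)$ is non-decreasing in $t$, where $\rho$ denotes distance to $v$) and satisfies $\Phi_1\bigl(Conf_n(\Gamma)\bigr)\subseteq Conf_{n,1}(\Gamma,U)$. Granting such a $\Phi$, the proof concludes formally: because every point moves only outward, the number of points lying in $U$ is non-increasing along the flow, so $\Phi_t$ carries $Conf_{n,1}(\Gamma,U)$ into itself for every $t$. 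Writing $\iota$ for the inclusion and $r:=\Phi_1$, viewed as a map into $Conf_{n,1}(\Gamma,U)$, the homotopy $\Phi$ then witnesses $\iota\circ r\simeq\mathrm{id}$ on $Conf_n(\Gamma)$, while its restriction to the subspace witnesses $r\circ\iota\simeq\mathrm{id}$ on $Conf_{n,1}(\Gamma,U)$; hence $\iota$ is a homotopy equivalence. To construct $\Phi$ I would push points outward along the legs: on each half-edge the points of a configuration inherit a linear order by their distance to $v$, and I would slide them outward preserving this order in a non-crossing ``cascade'', so that the outermost point on a leg crosses the frontier first and enters the collar, nudging any points already there further out, and so on down the leg. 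Collisions are thereby avoided edge-by-edge, and the overflow is absorbed by the collars identified in the first step.

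The main obstacle is continuity near the vertex, and this is exactly where the hypothesis ``at most one point in $U$'' is forced upon us. There is no canonical outward direction at $v$ when $\deg(v)\ge 2$, so a point sitting at $v$ cannot be pushed off continuously; indeed any rule that evacuates \emph{every} point of $U$ while fixing $v$ is necessarily discontinuous at configurations having a point at $v$. The remedy is to allow (at most) the innermost point to rest near $v$ rather than forcing it out, and to make the outward push a continuous \emph{configuration-dependent} function of the positions that vanishes as a point approaches $v$. The delicate case is that of two or more points approaching $v$ from different legs at equal distance: here one cannot continuously single out one point to retain, so the push must be applied symmetrically, evacuating all of the tied points together -- which is consistent, since landing in $Conf_{n,1}(\Gamma,U)$ requires only at most one remaining point and zero is permitted. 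Verifying that these local rules patch into a single globally continuous $\Phi$ obeying the monotonicity and the endpoint condition $\Phi_1\in Conf_{n,1}(\Gamma,U)$ is the technical heart of the lemma; everything else is formal.
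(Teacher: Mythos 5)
Your overall strategy coincides with the paper's: build a homotopy $\Phi$ from the identity that moves points monotonically away from $v$ and lands in $Conf_{n,1}(\Gamma,U)$ at time $1$; your formal reduction is sound (outward motion forces $\Phi_t$ to preserve $Conf_{n,1}(\Gamma,U)$, so $\Phi_1$ is a two-sided homotopy inverse to the inclusion), and you correctly isolate the two real obstructions: a point at $v$ cannot be pushed off continuously, and points tied at equal distance on different legs cannot be separated continuously, so ties must be evacuated together. But the proposal stops exactly where the proof has to happen: you never exhibit a push rule meeting the constraints you list, and you explicitly defer ``the technical heart.'' The constraints interact more rigidly than your text acknowledges. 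Writing $d_1\leq d_2$ for the distances to $v$ of the two closest points, continuity demands a single function $f(d_1,d_2)$ giving the new distance of the closest point, with $f(0,d_2)=0$ (points at $v$ stay put), $f(d,d)\geq 1/2$ (tied points evacuate), and $f$ continuous on the region $d_2>0$; your two local rules (``innermost rests near $v$'' and ``ties evacuate together'') are only compatible if such an interpolating $f$ exists, and producing one is the actual content of the lemma. In addition, your ``cascade'' of points nudging one another introduces collision-avoidance and patching bookkeeping that you do not carry out.

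The paper resolves all of this with one device missing from your proposal: rather than moving points individually, it applies to the whole configuration a homeomorphism of $\Gamma$ itself, namely $x\mapsto x^r$ on each edge incident to $v$ (oriented so $v$ corresponds to $0$) and the identity on all other edges, with exponent $r=\min(d_2(\bar x),1)$, where $d_2$ is the continuous function recording the distance from $v$ of the second-closest point. Because each configuration is moved by a homeomorphism of $\Gamma$, points can never collide --- no cascade is needed; because $0^r=0$ and the rule depends on the configuration only through $d_2$, continuity at configurations with a point at $v$ is automatic; and since $d^{\,d}\geq e^{-1/e}>1/2$, the second-closest point --- hence every point except the closest, including all tied points --- ends up outside $U$. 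The homotopy to the identity is simply the linear interpolation of exponents $r_t=(1-t)+t\,(d_2\wedge 1)$, which also restricts to the subspace. In short, your plan is the right one and matches the paper's in outline, but without some such explicit formula (this one, in effect, is your interpolating $f(d_1,d_2)=d_1^{\,d_2\wedge 1}$) it is a proof outline rather than a proof.
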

\begin{proof} 
For $r>0$ let $f_r: [0,1]\to [0,1]$ be the monotonic homeomorphism $x\mapsto x^r$. This defines a homeomorphism $h_r$ of the graph $\Gamma$ treated as a CW-complex, so that every edge comes equipped with an identification with $[0,1]$ as follows: map every edge to itself via the identity map, except for edges incident on $v$ -- orient these edges so that $v$ is identified with $0\in [0,1]$ and map them to themselves via $f_r$.

As the parameter $r$ varies, the functions $h_r$ assemble to a continuous isotopy $h:(0,1]\times \Gamma \to \Gamma$, and therefore we have an induced isotopy 
\[
H: (0,1]\times Conf_n(\Gamma)\to Conf_n(\Gamma)
\]
on the configuration spaces (either ordered or unordered), where at map at time $r$ is denoted by $H_r$.

Now let $d_2: Conf_n(\Gamma)\to [0,\infty]$ be distance of the 2nd closest point to $v$. To be clear, if there are two equidistant points closest to $v$ then $d_2$ will take on this minimal distance, and if there is no more than one point in the connected component of $v$ then $d_2$ will take the value $\infty$. Clearly this is a continuous function, similarly to how $\min(x,y)$ is continuous on $\R^2$.

With these at hand, consider the continuous map
\[
H_{d_2\wedge 1}: Conf_n(\Gamma) \to Conf_n(\Gamma) \,,\quad \bar{x}\mapsto H_{d_2(\bar{x})\wedge 1}(\bar{x}).
\]
First, observe that this map is well defined: $d_2>0$, as there can not be two distinct points at distance $0$ to $v$. The effect of this map on configurations is to push points away from $v$: if a point $x_i\in \Gamma$ of the configuration is at distance $0<s\leq 1$ from $v$, then it is mapped to the point on the same edge but at distance $s^{d_2\wedge 1}\geq s$ from $v$.

After application of the above map, the second closest point to $v$ will be at distance $d_2(\bar{x})^{d_2(\bar{x})\wedge 1}$. But this function always takes value greater than $1/2$ (recall that the minimum of $x^x$ is $e^{-1/e}\geq 0.6$). Thus there is at most one point at distance $\leq 1/2$ to $v$.

Lastly, the isotopy $H_{(1-t)+t(d_2\wedge 1)}$ connects the identity on $Conf_n(\Gamma)$ at $H_1$ with the above map that pushes all but the closest point away from $v$.
\end{proof}

\section{Contraction}
We wish to realize geometrically the An--Drummond-Cole--Knudsen homological edge contraction map from \cite[Appendix C]{an2017subdivisional}. For this purpose we use the following,
\begin{lemma}
If $U_i\subseteq \Gamma_i$ are open sets such that $\Gamma_1\setminus U_1 = \Gamma_2\setminus U_2$ and $\bar{U}_1\simeq \bar{U}_2$ are homotopy equivalent relative to $\partial \bar{U}_1=\partial \bar{U}_2$, then \[Conf_{n,1}(\Gamma_1,U_1) \simeq Conf_{n,1}(\Gamma_2,U_2).\] 
\end{lemma}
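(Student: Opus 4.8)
The plan is to decompose each $Conf_{n,1}(\Gamma_i,U_i)$ into two closed pieces that meet in a subspace common to $i=1,2$, and then invoke the gluing lemma for homotopy equivalences of pushouts. Write $A:=\Gamma_1\setminus U_1=\Gamma_2\setminus U_2$ for the common (closed) complement and $\partial:=\partial\bar U_1=\partial\bar U_2\subseteq A$ for the common boundary. Inside $X_i:=Conf_{n,1}(\Gamma_i,U_i)$ I would consider the closed subspace $B:=Conf_n(A)$ of configurations with no point in $U_i$, and the closed subspace $\bar E_i$ of configurations having at most one point in $U_i$ and at least one point in $\bar U_i=U_i\cup\partial$. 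These cover $X_i$ and intersect in $S:=\{\text{configurations in }A\text{ with at least one point on }\partial\}$, which depends only on $A$ and $\partial$ and is therefore identical for both graphs. Granting that the relevant inclusions are cofibrations (configuration spaces of graphs admit CW, indeed cube-complex, models in which $B$, $\bar E_i$, and $S$ are subcomplexes), $X_i$ is the pushout of $B\leftarrow S\rightarrow\bar E_i$, so by the gluing lemma it suffices to produce a homotopy equivalence $\bar E_1\simeq\bar E_2$ restricting to the identity on $S$.

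Next I would build this equivalence from the hypothesized data. Let $f\colon\bar U_1\to\bar U_2$ and $g\colon\bar U_2\to\bar U_1$ be homotopy inverses rel $\partial$, with homotopies $gf\simeq\mathrm{id}$ and $fg\simeq\mathrm{id}$ rel $\partial$. Define $F\colon\bar E_1\to\bar E_2$ by applying $f$ to every coordinate lying in $\bar U_1$ and the identity to every coordinate lying in $A\setminus\partial$; since $f$ fixes $\partial$, the two prescriptions agree on coordinates at $\partial$, so $F$ is well defined and is the identity on $S$. The map $G$ is defined symmetrically from $g$, and letting the rel-$\partial$ homotopies act coordinatewise on the points in $\bar U_i$ gives the natural candidates for $GF\simeq\mathrm{id}$ and $FG\simeq\mathrm{id}$.

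The one genuine obstacle is collisions at the boundary. If $f$ carries an interior point of $U_1$ onto a point $p\in\partial$, then $F(\bar x)$ may place that image on top of an $A$-point already sitting at $p$, so $F(\bar x)$ need not be a valid configuration. I would remove this by first deforming $f$, rel $\partial$, to a map with $f^{-1}(\partial)=\partial$ — equivalently $f(U_1)\subseteq U_2$ — which is achievable by a cellular/general-position adjustment in the $1$-dimensional complexes $\bar U_i$ (nudge the finitely many spurious preimages of the vertices $\partial$ back into edge interiors, and open up any collapsed edges). With $f(U_1)\subseteq U_2$ the moved point stays in $U_2$ and reaches $\partial$ only in the limit, matching precisely the gluing of $\bar E_2$ to $B$ along $S$, so $F$ is continuous and lands in $\bar E_2$. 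The same is needed for $g$ and, more delicately, for the homotopies, where one must keep the moving point inside $U$ throughout, i.e. arrange the tracks of $gf\simeq\mathrm{id}$ to have preimage of $\partial$ equal to $\partial\times[0,1]$. I expect this collar bookkeeping to be the technical heart of the argument; a cleaner alternative is to borrow the pushing technique of Lemma~\ref{lem:one_point} to flow the remaining $A$-points off a fixed collar of $\partial$, creating room for the special point to cross $\partial$ without collision while the homotopy runs. Once the maps and homotopies are arranged to respect the $U/\partial$ partition, the gluing lemma yields $Conf_{n,1}(\Gamma_1,U_1)\simeq Conf_{n,1}(\Gamma_2,U_2)$; the ordered and unordered cases run in parallel, the only difference being whether the point in $\bar U_i$ carries a label.
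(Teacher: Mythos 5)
Your core construction is exactly the paper's: apply $f$ to the (at most one) configuration point lying in $U_1$, fix all other points, and let the rel-$\partial$ homotopies act on that one coordinate. The difference is packaging. The paper defines, for \emph{any} boundary-fixing $f:\bar U_i\to\bar U_j$, a map $M(f):Conf_{n,1}(\Gamma_i,U_i)\to Conf_{n,1}(\Gamma_j,U_j)$ on the whole space at once --- continuity is just the pasting lemma for the closed cover by $\{$no point in $U_i\}$ and $\{$some point in $\bar U_i\}$, the two rules agreeing on the overlap because $f$ fixes $\partial$ --- and then observes that $M(f_{ji})\circ M(f_{ij})=M(f_{ji}\circ f_{ij})$ is homotopic to the identity via $M\bigl(h^{(i)}_t\bigr)$. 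Defining the maps and homotopies globally makes your pushout decomposition and the gluing lemma unnecessary, and spares you the cofibration and subcomplex claims (CW structures on $Conf_{n,1}$ in which $B$, $\bar E_i$, $S$ are subcomplexes) that you assert parenthetically but would actually have to verify. So where your argument differs from the paper's, the difference buys you extra hypotheses to check rather than extra generality.

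That said, the collision issue you isolate is genuine, and it is in fact glossed over by the paper: if $f$ carries an interior point of $U_1$ to a point of $\partial$ already occupied by a configuration point, then $M(f)$ as written does not land in the configuration space, so the paper's $M(f)$ is only well defined when $f$ (and each $h^{(i)}_t$) sends interior to interior. Your instinct to repair this is correct, but note that the repair is the weakest part of your write-up, not a routine cleanup: the general-position claim is plausible for the maps $f,g$ on $1$-complexes, but what you actually need is that the \emph{homotopies} $gf\simeq\mathrm{id}$ and $fg\simeq\mathrm{id}$ can be chosen with $h_t(U)\subseteq U$ for all $t$, and this cannot be achieved by post-composing with boundary-fixing pushes (as in Lemma \ref{lem:one_point}), since such pushes fix $\partial$ pointwise and hence cannot move an image point off $\partial$. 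You correctly flag this as ``the technical heart'' but leave it unproved. A cleaner resolution is to strengthen the hypothesis of the lemma: require the homotopy equivalence $\bar U_1\simeq\bar U_2$ rel $\partial$ to be implemented by maps and homotopies carrying $U_i$ into $U_j$; this costs nothing in the paper's application (the collapse and expansion maps used for subtree contraction in Corollary \ref{cor:contraction} visibly preserve interiors), and with it either your argument or the paper's direct one goes through without the delicate bookkeeping.
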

\begin{proof}
For any continuous function $f:\bar{U}_i\to \bar{U}_j$ that fixes the boundary, one gets a map on configuration spaces $M(f):Conf_{n,1}(\Gamma_i,U_i) \to Conf_{n,1}(\Gamma_j,U_j)$ by the rule that every point not in $U_i$ is mapped to itself, and the (at most one) point in $U_i$ is mapped to $U_j$ via $f$. Note that this definition patches to a continuous map.

Now for $\{i,j\}=\{1,2\}$ let $f_{ij}: \bar{U}_i\to \bar{U}_j$ be maps such that $f_{ji}\circ f_{ij}$ is homotopic rel $\partial$ to $Id_{U_i}$, say via the homotopy $h^{(i)}_t$. Then the induced maps on configurations $M(f_{ij})$ are homotopy equivalences, where the homotopies are given by $M(h^{(i)}_t)$.
\end{proof}

\begin{corollary}\label{cor:contraction}
The ``contraction of a subtree" map on homology is realized geometrically as follows.

Let $T\subseteq \Gamma$ be a tree and let $U\supseteq T$ be an $\epsilon$-neighborhood with $\epsilon\leq 1/2$. Then the contraction is realized by the span
\[
Conf_n(\Gamma/T)\, \tilde{\leftarrow}\, Conf_{n,1}\,(\Gamma/T,U/T)\, \tilde{\to}\, Conf_{n,1}(\Gamma,U) \into Conf_n(\Gamma)
\]
\end{corollary}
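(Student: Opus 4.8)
The plan is to verify that the two arrows decorated with $\sim$ are genuine homotopy equivalences, using the two lemmas of this section, and then to match the resulting zigzag with the An--Drummond-Cole--Knudsen map. The key structural observation is that the rightmost arrow $Conf_{n,1}(\Gamma,U)\into Conf_n(\Gamma)$ need \emph{not} be inverted: inverting only the two equivalences already produces a well-defined morphism
\[
Conf_n(\Gamma/T)\;\overset{\sim}{\to}\; Conf_{n,1}(\Gamma/T,U/T)\;\overset{\sim}{\to}\; Conf_{n,1}(\Gamma,U)\;\into\; Conf_n(\Gamma)
\]
in the homotopy category, in the direction dictated by the opposite graph-minor category. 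So the topological content reduces to the two equivalences, and I would establish them separately.

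First I would treat the left-hand equivalence. Contracting the subtree $T$ sends it to a single vertex $\ast\in\Gamma/T$, and since $U$ is the $\epsilon$-neighborhood of $T$ with $\epsilon\le 1/2$, its image $U/T$ is exactly the metric ball of radius $\epsilon$ about $\ast$: it consists of $\ast$ together with the length-$\epsilon$ initial segments of every edge leaving $T$. I would then invoke Lemma~\ref{lem:one_point}. Although that lemma is stated for the ball of radius exactly $1/2$, its proof applies verbatim to any radius $\epsilon\le 1/2$: the isotopy $H_{d_2\wedge 1}$ moves the second-closest point to distance at least $e^{-1/e}>1/2\ge\epsilon$, so afterward at most one point remains in the ball of radius $\epsilon$. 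Hence $Conf_{n,1}(\Gamma/T,U/T)\into Conf_n(\Gamma/T)$ is a homotopy equivalence. One technical point deserves mention: $\Gamma/T$ may acquire loops at $\ast$ (from edges of $\Gamma$ with both endpoints in $T$), and a loop is not oriented with $\ast$ only at $0$; I would handle this by viewing such a loop as contributing two initial segments and pushing along each, a cosmetic extension of the isotopy.

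Next I would obtain the middle equivalence by applying the preceding lemma to the pairs $(\Gamma,U)$ and $(\Gamma/T,U/T)$. Its two hypotheses are (a) $\Gamma\setminus U=(\Gamma/T)\setminus(U/T)$, which holds because the contraction map is the identity away from $U\supseteq T$; and (b) $\overline U\simeq\overline{U/T}$ rel $\partial$. For (b) both $\overline U$ and $\overline{U/T}$ are finite trees sharing the same boundary $\partial$ — the points at distance exactly $\epsilon$ from $T$, which lie outside $T$ and are therefore fixed by contraction — and the contraction $c\colon\overline U\to\overline{U/T}$ collapses the contractible subcomplex $T$, which is disjoint from $\partial$. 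Collapsing a contractible subcomplex disjoint from a subcomplex $\partial$ is a homotopy equivalence rel $\partial$ (extend a contraction of $T$, stationary on $\partial$, over $\overline U$ by the homotopy extension property for the pair $(\overline U,\,T\sqcup\partial)$), which supplies exactly the maps $f_{ij}$ and the rel-$\partial$ homotopies required by that lemma. This yields $Conf_{n,1}(\Gamma/T,U/T)\overset{\sim}{\to}Conf_{n,1}(\Gamma,U)$.

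Finally I would identify the induced homotopy class with the homological contraction map of \cite[Appendix C]{an2017subdivisional}. Since \cite[Remark 1.10]{miyata2020categorical} asserts that the chain-level contraction lifts to a \emph{unique} homotopy class, it is enough to check that the zigzag above induces the ADCK formula on the relevant (Świątkowski/cellular) chains, a matter of tracing a generator through the three maps. I expect the main obstacle to lie precisely here: the two equivalences fall out cleanly from the lemmas, but reconciling the geometric span with the combinatorial, chain-level definition — matching orientation and sign conventions — is the step that requires genuine care rather than routine verification, with the rel-$\partial$ collapse in step (b) being the only other place where one must supply an honest homotopy rather than merely cite a deformation.
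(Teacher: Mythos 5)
Your proposal is correct and matches the paper's (implicit) argument: the corollary is stated without a separate proof precisely because it is the direct application of Lemma \ref{lem:one_point} (giving the left equivalence, applied in $\Gamma/T$ at the image vertex of $T$) together with the immediately preceding lemma on $Conf_{n,1}$ (giving the middle equivalence), which is exactly the route you take, including the correct observation that the rightmost inclusion is not inverted. The only quibble is that $\overline{U}$ need not literally be a finite tree (e.g.\ when $\epsilon=1/2$ and an edge outside $T$ has both endpoints in $T$), but your homotopy-extension argument for collapsing the contractible subcomplex $T$ rel $\partial$ does not depend on that and goes through.
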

Now, the cone of the contraction can be described explicitly: it is the space of configurations that have at least two points in $U$, where all other configurations are collapsed to a point.

\section{Deletion} \label{sec:deletion}
Let $\Gamma=(V,E)$ be a graph. A \emph{half-edge} $h$ in $\Gamma$ is formally an incident pair $(v,e)\in V\times E$. Denote $e(h):=e$ and $v(h):=v$. Geometrically, consider the half-edge $h$ to be the subspace of the edge $e(h)$ identified with the interval $(0,1/2)\subset [0,1]$, parametrized so that $0$ corresponds to the vertex $v(h)$. We will abuse the notation and freely treat $h$ as an open set in the topological space $\Gamma$.

\begin{theorem}
Let $H=\{h_1,\ldots,h_k\}$ be a subset of the half-edges incident to $v$. Then the cone of the inclusion $\iota_H: Conf_n(\Gamma\setminus \cup H) \into Conf_n(\Gamma)$ is homotopy equivalent to the reduced suspensions
\[
\bigvee_{h\in H} \tilde{\Sigma}Conf_{n-1}(\Gamma\setminus \{v\})_+.
\]

In the ordered case the cone is equivalent to the induction of this wedge with its $S_{n-1}$-action to $S_n$. That is,
$S_n\wedge_{S_{n-1}}\bigvee_{h\in H} \tilde{\Sigma}(Conf_{n-1}(\Gamma\setminus \{v\})_+.
$
\end{theorem}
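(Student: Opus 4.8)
The plan is to compute the quotient $Conf_n(\Gamma)/Conf_n(\Gamma\setminus\cup H)$, which models the mapping cone once we know $\iota_H$ is a cofibration of (spaces homotopy equivalent to) CW complexes. First I would pass to a local model near $v$ using Lemma~\ref{lem:one_point}. Let $U$ be the open ball of radius $1/2$ about $v$, so that $U$ is the open star consisting of $v$ together with every half-edge incident to $v$, and set $U' := U\setminus\cup H$. The push-away-from-$v$ deformation used in the proof of Lemma~\ref{lem:one_point} moves points to larger distance from $v$ along their own edges, and hence carries configurations avoiding $\cup H$ to configurations avoiding $\cup H$; consequently it restricts to the subspace. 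The resulting commuting square of inclusions
\[
\begin{array}{ccc}
Conf_{n,1}(\Gamma\setminus\cup H,\,U') & \into & Conf_{n,1}(\Gamma,U)\\
\downarrow & & \downarrow\\
Conf_n(\Gamma\setminus\cup H) & \into & Conf_n(\Gamma)
\end{array}
\]
has both vertical maps homotopy equivalences, so the cone of $\iota_H$ agrees with the cone of the top inclusion. I am thus reduced to the pair $(X,A):=\big(Conf_{n,1}(\Gamma,U),\,Conf_{n,1}(\Gamma\setminus\cup H,U')\big)$.

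Next I would identify the open complement $X\setminus A$. A configuration lies there exactly when some point meets $\cup H$; since at most one point lies in $U\supseteq\cup H$, this means precisely one point sits on a single half-edge $h\in H$ while the other $n-1$ points lie in $\Gamma\setminus U$. Recording which half-edge is occupied gives a homeomorphism
\[
X\setminus A \;\cong\; \bigsqcup_{h\in H}\; h\times Conf_{n-1}(\Gamma\setminus U),
\]
where $h\cong(0,1/2)$, and a radial push off $v$ deformation retracts $\Gamma\setminus\{v\}$ onto $\Gamma\setminus U$, yielding $Conf_{n-1}(\Gamma\setminus U)\simeq Conf_{n-1}(\Gamma\setminus\{v\})$ (equivariantly for label permutations in the ordered case).

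Finally I would analyze the quotient $X/A$, whose content is the boundary behaviour of the interval coordinate on each $h$. As the distinguished point runs to the vertex end $0$ or to the outer end $1/2$ of the closed interval $[0,1/2]$, the limiting configuration places that point at $v$ or on $\partial U$, in either case landing in $A$. Hence collapsing $A$ sends both ends $\{0,1/2\}\times Conf_{n-1}(\Gamma\setminus U)$ to the single basepoint, and the identification $(I\times Y)/(\partial I\times Y)\cong\tilde{\Sigma}(Y_+)$ turns each summand into a reduced suspension; since all of $A$ collapses to one point, the summands are joined at their basepoints, giving $X/A\simeq\bigvee_{h\in H}\tilde{\Sigma}\,Conf_{n-1}(\Gamma\setminus\{v\})_+$. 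For the ordered case the same analysis runs label-by-label: the distinguished point may carry any of the $n$ labels, the locus on which it carries the label $n$ is the $S_{n-1}$-space just described, and the free $S_n$-action on the choice of distinguished label exhibits $X/A$ as the induction $S_n\wedge_{S_{n-1}}\bigvee_{h\in H}\tilde{\Sigma}\,Conf_{n-1}(\Gamma\setminus\{v\})_+$.

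I expect the main obstacle to be the point-set bookkeeping in this last step: verifying that the only way to escape $X\setminus A$ is through the two ends of the interval coordinate, so that no spurious identifications occur, and confirming that $A\into X$ is a genuine cofibration so that $X/A$ indeed computes the homotopy cone. Obtaining the reduced rather than unreduced suspension, together with the disjoint basepoint $(-)_+$ and the wedge over $H$, hinges precisely on the fact that both endpoints of every $h$ are swept into the single collapsed subspace $A$.
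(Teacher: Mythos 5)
Your proposal is correct and follows essentially the same route as the paper: reduce via Lemma \ref{lem:one_point} to the pair $\bigl(Conf_{n,1}(\Gamma,U),\, Conf_{n,1}(\Gamma\setminus \cup H, U\setminus \cup H)\bigr)$, note it is a cofibration, and identify its quotient with the wedge of suspensions by tracking the unique point on $\cup H$, with the ordered case handled by indexing over labels. Your one deviation is a welcome refinement: you first produce $\tilde{\Sigma}\,Conf_{n-1}(\Gamma\setminus U)_+$ and then retract onto $Conf_{n-1}(\Gamma\setminus\{v\})$, whereas the paper writes the homeomorphism directly against $\Gamma\setminus\{v\}$ (where its stated inverse, ``add a point on $h$ at distance $t/2$,'' is problematic when the given configuration already meets $U$).
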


\begin{proof}
We start with the unordered case. Let $U$ be the ball of radius $1/2$ around $v$. Lemma \ref{lem:one_point} gives a homotopy equivalence of pairs 
\[
(Conf_{n,1}(\Gamma, U), Conf_{n,1}(\Gamma\setminus \cup H, U\setminus \cup H)) \simeq (Conf_{n}(\Gamma), Conf_n(\Gamma\setminus \cup H))
\]
and the former is a cofibration. Thus, since cones are homotopy invariant, it is sufficient to construct a homeomorphism
\[
Conf_{n,1}(\Gamma, U)/Conf_{n,1}(\Gamma\setminus \cup H, U\setminus \cup H) \to \bigvee_{h\in H} \tilde{\Sigma}Conf_{n-1}(\Gamma\setminus \{v\})_+.
\]

Suppose a configuration i $Conf_{n,1}(\Gamma, U)$ has a point $x$ on $h\in H$. Then map it to the suspension labelled by $h$, where the configuration is obtained by forgetting $x$ and setting the suspension parameter equal to $2d(x,v)$. If no such $x$ exists, map the configuration to the basepoint $*$. Note that this function is well-defined, as there can be at most one point on our set of half-edges. Note also that if $x=v$ then the suspension parameter is set to $0$ which lands on the basepoint. Lastly, as $x$ leaves the half-edge, the cone parameter goes to $1$ and its image under our map will approach the basepoint.

Since configurations in $\Gamma\setminus \cup H$ are all sent to the basepoint, the map above factors through the quotient by $Conf_{n,1}(\Gamma\setminus \cup H, U\setminus \cup H)$.

The inverse map is defined by sending a point $(\bar{x},t)\in \tilde{\Sigma} Conf_{n-1}(\Gamma\setminus\{v\})_+$ on the wedge-summand labeled by $h$ to the configuration that has an additional point on $h$ at distance $t/2$ to $v$. Of course, the basepoint has to map to the basepoint. These maps clearly patch to a continuous map, and are inverses to the above maps that would forget the newly added point.

To adapt the above argument for the ordered case, index the wedge sum by $[n]\times H$, accounting for the label of the point $x_i\in h$. The rest of the construction works in just the same way. Clearly this construction is compatible with the $S_n$-action, and the stabilizer of $n$ acts by the ordinary permutation action on $Conf_{n-1}(\Gamma\setminus \{v\})$.
\end{proof}
Lastly, we wish to describe the "boundary map" $Cone(\iota_H) \to \tilde{\Sigma}Conf_n(\Gamma\setminus \cup H)_+$ obtained by crushing the base of the cone $\cong Conf_n(\Gamma)$ to a point.
\begin{proposition}
Under the identification \[Cone(\iota_H)\simeq \bigvee_{h\in H}\tilde{\Sigma} Conf_{n-1}(\Gamma\setminus \{v\})_+\]
the boundary map to $\tilde{\Sigma} Conf_{n}(\Gamma\setminus \cup H)_+$ on the wedge summand labeled by $h$ has the form
\[
\tilde\Sigma add_{v} - \tilde\Sigma add_{h}
\]
where the map $add_v$ adds the vertex $v$ to a configuration, while $add_h$ adds a new closest point to $v$ on the edge $e(h)$.
\end{proposition}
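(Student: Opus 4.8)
The plan is to identify the stated boundary map with the Puppe connecting map of the cofibration used in the previous theorem, and then to compute that connecting map one half-edge at a time. Throughout I keep the homotopy-equivalent model from the theorem's proof: writing $U$ for the ball of radius $1/2$ about $v$, set $X'=Conf_{n,1}(\Gamma,U)$ and $A'=Conf_{n,1}(\Gamma\setminus\cup H,\,U\setminus\cup H)$, so that $A'\into X'$ is a cofibration with $X'/A'\cong\bigvee_{h}\tilde\Sigma Conf_{n-1}(\Gamma\setminus\{v\})_+$ and $A'\simeq Conf_n(\Gamma\setminus\cup H)$ by Lemma \ref{lem:one_point}. Under these equivalences the map in question is exactly the connecting map $\delta\colon X'/A'\to\Sigma A'\simeq\tilde\Sigma Conf_n(\Gamma\setminus\cup H)_+$ of this cofibration, so it suffices to compute $\delta$ on each wedge summand.

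First I would reduce to one summand by naturality. For a fixed $h\in H$ let $W_h\subseteq X'$ be the subspace of configurations whose unique point in $U$ lies on the \emph{closed} half-edge $\overline h$ (distance $\le 1/2$ from $v$ along $e(h)$), the remaining $n-1$ points lying outside $U$. Recording the position of that point gives a homeomorphism $W_h\cong Z\times[0,1]$, where $Z=Conf_{n-1}(\Gamma\setminus U)\simeq Conf_{n-1}(\Gamma\setminus\{v\})$ (by the same pushing-away-from-$v$ deformation as in Lemma \ref{lem:one_point}) and the interval coordinate is $t=2\,d(x,v)$. Here $W_h\cap A'=Z\times\{0,1\}$: the end $t=0$ consists of configurations with the extra point at $v$, and the end $t=1$ of configurations with the extra point at the midpoint of $e(h)$, i.e. the closest point to $v$ surviving in $\Gamma\setminus\cup H$. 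Thus, after the identification $A'\simeq Conf_n(\Gamma\setminus\cup H)$, the two boundary inclusions $Z\times\{0\}\into A'$ and $Z\times\{1\}\into A'$ are precisely $add_v$ and $add_h$. Since the inclusion of pairs $(W_h,W_h\cap A')\into(X',A')$ realizes the $h$-summand inclusion on cofibers, naturality of the Puppe sequence gives that $\delta$ restricted to this summand factors as the connecting map of $(W_h,W_h\cap A')$ followed by $\Sigma(add_v\sqcup add_h)$.

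It then remains to compute the connecting map of the product pair $(Z\times[0,1],\,Z\times\{0,1\})$. Its cofiber is $\tilde\Sigma Z_+$ with suspension coordinate $t$, and its Puppe map to $\Sigma(Z\sqcup Z)$ is the comparison-of-endpoints map, which, using the pinch comultiplication on the suspension, is the difference of the two boundary ends. Composing with $\Sigma(add_v\sqcup add_h)$ yields $\tilde\Sigma add_v-\tilde\Sigma add_h$. Concretely this amounts to splitting the motion of the extra point at $t=1/2$: for $t\le 1/2$ push it in to $v$ (producing $add_v$ with one suspension orientation) and for $t\ge 1/2$ push it out to the midpoint (producing $add_h$ with the opposite orientation), the two halves joining at the basepoint $t=1/2$.

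The main obstacle is the bookkeeping of orientations and basepoints: the overall sign, i.e. whether the summand map is $\tilde\Sigma add_v-\tilde\Sigma add_h$ or its negative, depends on the chosen orientation of the cone relative to the theorem's suspension coordinate $t=2\,d(x,v)$, and must be pinned down by fixing conventions once and for all. A secondary technical point is to make the naturality reduction fully rigorous — producing an explicit homotopy inverse to the collapse $X'\cup_{A'}CA'\to X'/A'$ from the collar of the cofibration — and to verify that all identifications are compatible with the Lemma \ref{lem:one_point} equivalences $Z\simeq Conf_{n-1}(\Gamma\setminus\{v\})$ and $A'\simeq Conf_n(\Gamma\setminus\cup H)$, so that the two boundary inclusions become the literal maps $add_v$ and $add_h$ rather than merely homotopic surrogates.
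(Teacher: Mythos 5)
Your proposal is correct in substance but reaches the conclusion by a genuinely different route than the paper. The paper's proof is hands-on: it builds an explicit homotopy equivalence from the collapse $Conf_{n,1}(\Gamma,U)/Conf_{n,1}(\Gamma\setminus \cup H,\, U\setminus \cup H)$ to the mapping cone via a three-piece recipe (near third of $h$: move $x_i$ to $v$ with cone parameter $6d$; far third: scale $e(h)$ away from $v$ so $x_i$ sits at distance $1/2$, with cone parameter $6(1/2-d)$; middle third: rescale onto the whole half-edge), then crushes the base of the cone and observes that the boundary map factors through the pinch $\Sigma X \to \Sigma_{bottom}X \vee \Sigma_{top}X$, reading off $\tilde\Sigma add_v$ on the bottom and $-\tilde\Sigma add_h$ on the top (the sign coming from the reversed suspension coordinate). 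You instead reduce by naturality of the Puppe sequence to the sub-pair $(W_h, W_h\cap A')\cong (Z\times[0,1], Z\times\{0,1\})$ and quote the standard fact that the connecting map of a cylinder rel its ends is the difference of the two end inclusions; the underlying geometric content (suspension coordinate $t=2d(x,v)$, ends attaching to $A'$ via $add_v$ and $add_h$) is identical, but your packaging is more modular and shorter. What the paper's explicit recipe buys is exactly the point-set care that your reduction postpones, and this is where you should be careful: as literally written, your homeomorphism $W_h\cong Z\times[0,1]$ fails, since with the remaining $n-1$ points merely outside $U$ (distance $\geq 1/2$) a configuration having a point at distance exactly $1/2$ on $e(h)$ collides with the distinguished point at $t=1$; the fix is to require the remaining points to lie strictly outside $\bar U$, and then to compare $Conf_{n-1}(\Gamma\setminus \bar U)$ with $Conf_{n-1}(\Gamma\setminus U)$ by another pushing isotopy as in Lemma \ref{lem:one_point}. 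Relatedly, $add_h$ is only well defined on all of $Conf_{n-1}(\Gamma\setminus\{v\})$ via an edge-scaling convention (points already on $e(h)$ must be pushed away from $v$), which is precisely what the paper's second case implements. These wrinkles are fixable, and you partially flag them yourself, but they constitute the actual work in the paper's proof rather than an afterthought.
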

\begin{proof}
An explicit homotopy equivalence from the collapse
\[
Conf_{n,1}(\Gamma, U)/Conf_{n,1}(\Gamma\setminus \cup H, U\setminus \cup H)
\]
to the mapping cone on the inclusion can be constructed by the following recipe. For a configuration with a point $x_i$ on a half-edge $h\in H$, say at distance $d=d(x_i,v)$,
\begin{itemize}
    \item when $d\leq 1/6$, i.e. $x_i$ is on the third of $h$ nearest to $v$, move $x_i$ to $v$ while fixing all other points and set the cone parameter to $6d$;
    \item when $1/2-d\leq 1/6$, i.e. $x_i$ is on the third of $h$ farthest away from $v$, scale the edge $e(h)$ down away from $v$, moving all points on it until $x_i$ appears at distance $1/2$ and set the cone parameter to $6(1/2-d)$;
    \item lastly, when $x_i$ is in the middle third of $h$, i.e. $1/6\leq d \leq 2/6$, scale this middle third up to encompass the whole of $h$. This amounts to moving $x_i$ to be at distance $3(d-1/4) + 1/4$ to $v$. All other points remain fixed. 
\end{itemize}
These three maps patch together to give the desired homotopy equivalence (e.g. a homotopy $H_t$ can be constructed by replacing the terms $1/6$ in the above construction with $t/6$ and adjusting the formulas accordingly).

Now, the boundary map is constructed by collapsing the base of the mapping cone to a point. This is the set of points in $Conf_{n,1}(\Gamma, U)$, which in the above construction is the range of configurations with a point in the middle third of some $h\in H$.
But under the homeomorphism with $\bigvee_{h\in H}\tilde{\Sigma} Conf_{n-1}(\Gamma\setminus \{v\})_+$, such configurations correspond to the middle thirds in every suspension. After collapsing the base of the cone to a point, the boundary map factors through maps of the form $\Sigma X \to \Sigma_{bottom} X \vee \Sigma_{top} X$ where one collapses the middle third to a point. Now observe that on the bottom third of the suspensions, the boundary map is precisely $\Sigma add_v$, while on the top third we find maps that add a new closest point to $v$ on the edge $e(h)$ but with the suspension parameter going backwards (this is the $-d$ term appearing in the cone parameter). Thus this map is homotopic to $-\Sigma add_{h}$.

The same arguments apply in the case of ordered configurations.
\end{proof}

\section{Applications: Generalized homology theories} \label{sec:applications}

By the previous section, the Puppe cofiber sequence for the inclusion of half-edge deletion takes the form
\begin{eqnarray*}
 UConf_n(\Gamma\setminus \cup H) &\into& UConf_n(\Gamma) \to
\bigvee_{h\in H} \tilde{\Sigma}UConf_{n-1}(\Gamma\setminus \{v\})_+ \to \\
\to \tilde\Sigma UConf_n(\Gamma\setminus \cup H)_+ &\into&  \tilde\Sigma UConf_n(\Gamma)_+ \to \ldots
\end{eqnarray*}
where the connecting map between the rows is given on the wedge-summand with label $h\in H$ by $\tilde\Sigma add_v- \tilde\Sigma add_h$. Also, as mentioned in the previous section, a similar sequence exists in the $S_n$-equivariant context for the ordered configurations spaces, but with an appropriate induction on configurations of $n-1$ points. 

Applying any generalized homology or cohomology theory to this Puppe sequence now yields the long exact sequence claimed in Theorem \ref{thm:LES}.

We next turn to the proof of Theorem \ref{thm:fin-gen-homology}, regarding the generalized homology of unordered configuration spaces as a representation of the Miyata-Proudfoot-Ramos opposite graph-minor category defined in \cite{miyata2020categorical}.


\begin{proof}[Proof of Theorem \ref{thm:fin-gen-homology}]
The basic inputs to the proof are the Noetherianity of the coefficient ring $E_{\ast}$, the fact that the singular chains $C_{i}(UConf_{n}(-))$ are equivalent to a finitely generated representation of the opposite graph minor category, and the Atiyah-Hirzebruch spectral sequence -- the AHSS for short.

Atiyah-Hirzebruch provide a spectral sequence for every space $X$ with first page
\[
E^1_{p,q} = C_{p}(X;E_{q}) := C_{p}(X) \otimes E_{q} \implies E_{p+q}(X).
\]
More explicitly, $E^{1}$-page of the spectral sequence looks as follows-            \begin{figure}[H]
\centering
    \begin{tikzpicture}[scale = 0.85, every node/.style={transform shape}]
    \draw (-0.5,-0.2) -- (12,-0.2);
    \draw (-0.2,-0.5) -- (-0.2,3);
    \node at (1,0) {$C_{0}(X) \otimes E_{0}$};
    \node at (4,0) {$C_{1}(X) \otimes E_{0}$};
    \node at (7,0) {$C_{2}(X) \otimes E_{0}$};
    \node at (10,0) {$C_{3}(X) \otimes E_{0}$};
    \node at (1,1) {$C_{0}(X) \otimes E_{1}$};
    \node at (4,1) {$C_{1}(X) \otimes E_{1}$};
    \node at (7,1) {$C_{2}(X) \otimes E_{1}$};
    \node at (10,1) {$C_{3}(X) \otimes E_{1}$};
    \node at (1,2) {$C_{0}(X) \otimes E_{2}$};
    \node at (4,2) {$C_{1}(X) \otimes E_{2}$};
    \node at (7,2) {$C_{2}(X) \otimes E_{2}$};
    \node at (10,2) {$C_{3}(X) \otimes E_{2}$};

    \node at (1,-0.5) {0};
    \node at (4,-0.5) {1};
    \node at (7,-0.5) {2};
    \node at (10,-0.5) {3};
    \node at (-.5,0) {0};
    \node at (-.5,1) {1};
    \node at (-.5,2) {2};
    

    \draw[->] (0,0) -> (-1,0); 
    \draw[->] (3,0) -> (2,0);
    \draw[->] (6,0) -> (5,0);
    \draw[->] (9,0) -> (8,0); 
    \draw[->] (0,1) -> (-1,1); 
    \draw[->] (3,1) -> (2,1);
    \draw[->] (6,1) -> (5,1);
    \draw[->] (9,1) -> (8,1);
    \node at (12,1) {$\cdots$};
    \node at (4,2.8) {$\cdot$};
    \node at (4,2.9) {$\cdot$};
    \node at (4,3) {$\cdot$};
    \node at (7,2.8) {$\cdot$};
    \node at (7,2.9) {$\cdot$};
    \node at (7,3) {$\cdot$};
    \end{tikzpicture}
\end{figure}
\cite{an2017subdivisional} shows that the singular chains $C_*(UConf_n(\Gamma))$ are quasi-isomorphic to a smaller chain complex, the reduced \'{S}wi\k{a}tkowski complex $\widetilde{S}_{*,n}(\Gamma)$, for every graph $\Gamma$. We may thus replace replace every occurrence of $C_*(X)$ in the AHSS above with these \'{S}wi\k{a}tkowski complexes. 

\cite{an2017subdivisional} further shows that $\widetilde{S}_{*,n}(-)$ is a representation of the opposite graph-minor category. But our geometric construction of edge contraction in Corollary \ref{cor:contraction} shows that the quasi-isomorphism with singular chains is compatible with a homotopy action of the opposite graph-minor category. Furthermore \cite[Proof of 1.15]{miyata2020categorical} shows that $\widetilde{S}_{i,n}(-)$ is finitely generated as a representation of the opposite graph-minor category for every $i\geq 0$.

Note that all terms $\widetilde{S}_{i,n}(-) \otimes E_{j}$ are $E_{0}$-modules, and recall the fact that $E_{\ast}$ being Noetherian implies $E_{0}$ is a Noetherian ring and every $E_{j}$ is a finitely generated $E_{0}$-module. Thus all terms in the $E^1$-page above, with $C_*$ replaced by $\widetilde{S}_{*,n}$, form finitely generated representations of the opposite graph minor category over the ring $E_{0}$.

Now, all terms in the later pages of the spectral sequence are subquotients of those appearing in the first page. But \cite[Theorem 1.2]{miyata2020categorical} shows that finitely generated representations of the opposite graph-minor category over a Noetherian ring are again Noetherian, and thus finite generation passes to subquotients. The same can be said of the terms at the $E^{\infty}$-page, which gives us the graded factors of a filtration on the functor $E_{i}(UConf_{n}(-))$. Thus the latter functor is filtered with quotients finitely generated as representations of the opposite graph minor category. Hence, it is itself a finitely generated representation.
\end{proof}

\bibliographystyle{alpha}
\bibliography{refs}

\end{document}